\definecolor{refkey}{rgb}{0,1,1}
\definecolor{labelkey}{rgb}{1,0,0}
\newcommand{\C}{\mathbb{C}}
\newcommand{\R}{\mathbb{R}}
\newcommand{\conv}{\text{conv}\,}
\newcommand{\tr}{\text{tr}\,}
\newcommand{\inter}{\operatorname{int}}
\renewcommand{\span}{\text{span}\,}
\newcommand{\eq} [1] {\begin{equation}\label{#1}}
\newcommand{\en} {\end{equation}}
\renewcommand{\P} {\mathbb{P}}
\newcommand{\ds}{\displaystyle}
\newcommand{\CS}{\mathbb{C}S}
\renewcommand{\Im}{\operatorname{Im}}
\newcommand{\imag}{\Im}
\renewcommand{\Re}{\operatorname{Re}}
\newcommand{\real}{\Re}
\newcommand{\diag}{\operatorname{diag}}
\newcommand{\abs}[1]{\left\vert#1\right\vert}
\newcommand{\yp}{y^+(\alpha_k)}
\newcommand{\ym}{y^-(\alpha_k)}
\newtheorem{thm}{Theorem}
\newtheorem{theorem}[thm]{Theorem}
\newtheorem{lemma}[thm]{Lemma}
\newtheorem{prop}[thm]{Proposition}
\newtheorem{cor}[thm]{Corollary}
\theoremstyle{definition}
\newtheorem{example}[thm]{Example}
\begin{document}

\title[Continuity of Vectors Realizing Points in the Field of Values]{Continuity Properties of Vectors Realizing Points in the Classical Field of Values}
\author{Dan Corey, Charles R. Johnson, Ryan Kirk, Brian Lins$^*$, Ilya Spitkovsky}
\date{}
\address{Dan Corey, University of Notre Dame}
\email{dcorey@nd.edu}
\address{Charles R. Johnson, College of William \& Mary}
\email{crjohnso@math.wm.edu}
\address{Ryan Kirk, University of North Carolina at Chapel Hill}
\email{rtkirk@email.unc.edu}
\address{Brian Lins, Hampden-Sydney College}
\email{blins@hsc.edu}
\thanks{$^*$Corresponding author.}
\address{Ilya Spitkovsky, College of William \& Mary}
\email{ilya@math.wm.edu, imspitkovsky@gmail.com}
\subjclass[2000]{Primary 15A60, 47A12; Secondary 54C08}
\keywords{Field of values; numerical
range; inverse continuity; weak continuity}
\thanks{This work was partially supported by NSF grant DMS-0751964}

\begin{abstract}
For an $n$-by-$n$ matrix $A$, let $f_A$
be its ``field of values generating function'' defined as
$f_A\colon x\mapsto x^*Ax$. We consider two natural versions of the
continuity, which we call strong and weak, of $f_A^{-1}$ (which is of course multi-valued) on the field of values $F(A)$.  The strong continuity holds, in particular, on the interior of $F(A)$, and at such points $z \in \partial F(A)$ which are either corner
points, belong to the relative interior of flat portions of $\partial
F(A)$, or whose preimage under $f_A$ is contained in a one-dimensional set.
Consequently, $f_A^{-1}$ is continuous in this sense on the
whole $F(A)$ for all normal, 2-by-2,  and unitarily irreducible 3-by-3
matrices. Nevertheless, we show by example that the
strong continuity of $f_A^{-1}$ fails at certain points of $\partial F(A)$ for
some (unitarily reducible) 3-by-3  and (unitarily irreducible) 4-by-4
matrices. The weak continuity, in its turn, fails for some unitarily reducible 4-by-4 and untiarily irreducible 6-by-6 matrices.
\end{abstract}

\maketitle
\section{Introduction}
For each $A\in M_{n}(\C)$, the \textit{field of values} is
defined by
\[ F(A)=\{x^*Ax \colon x\in\C^n,x^*x=1\}.\]
The field of values arises in many contexts and it and its
generalizations have been heavily studied. Many known properties
may be found, e.g.,  in \cite{HJ2} or \cite{GusRa}.

Let $f_{A}(x)=x^{*}Ax$. Of course, $f_{A}$ is a continuous function of
$x$ on $\C^n$, and $F(A)$ is the image under $f_A$ of the  unit sphere
$\C{S}^n$ of $\C^n$. The inverse map from $F(A)$ to the unit
sphere is  multi-valued, in particular because $f_A(\omega x)=f_A(x)$
for any unimodular $\omega\in\C$. It is therefore (more) natural to
consider $f_A$ as defined on the quotient space $\C S^n/\C S^1$
which can be identified with the complex projective space
$\C\P^{n-1}$. But even under this convention, the mapping \eq{inv}
f_A^{-1}\colon F(A)\longrightarrow \C\P^{n-1} \en for $n>1$ is never
one-to-one. In fact, for $z \in \inter F(A)$, $f^{-1}_A(z)$ contains $n$ linearly independent vectors \cite[Theorem 1]{Carden09}.

Here, we consider the continuity properties of the multi-valued
mapping \eqref{inv}.  There are various notions of continuity of such
mappings, see e.g. \cite[Section c-9]{Hart04}. Let  $g$ be a multi-valued function from a metric
space $(X,d)$ to a metric space $(Y,\rho)$. Then, borrowing the
terminology from \cite{BraHe94},  $g$ is {\em weakly continuous} at
$x\in X$ if there exists $y\in g(x)$ for which 
\eq{cont} 
\forall \ \epsilon>0\ \exists\ \delta>0\colon d(x,x')<\delta\Rightarrow \exists\ y'\in g(x') \text{ such that } \rho(y,y')<\epsilon. 
\en  
Respectively,  $g$ is {\em strongly continuous} at
$x\in X$ if \eqref{cont} holds {\em for all} $y\in g(x)$.

Note that $g$ is weakly (strongly) continuous at $x$ if and only if the inverse mapping
$f=g^{-1}$ is open at some (respectively, all) $y\in g(x)$. Of course, if
$g(x)$ happens to be a singleton, then weak continuity of $g$ at $x$ is equivalent to its strong continuity there.

\begin{prop}\label{comp}
Let $g=f^{-1}$, for $f\colon Y\rightarrow X$
a continuous single valued function defined on a compact set $Y$. Then
$g$ is strongly continuous at all points $x\in X$ for which $g(x)$ is a singleton.
\end{prop}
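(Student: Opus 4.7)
The plan is a standard compactness-and-continuity argument, made possible because the hypothesis $g(x) = \{y\}$ collapses strong continuity to weak continuity. I would first observe that, since $g(x)$ is assumed to be a singleton, the universal and existential quantifiers over $g(x)$ in the definition of strong versus weak continuity coincide. So it suffices to verify \eqref{cont} for the unique $y \in g(x)$.

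Next I would argue by contradiction. Suppose strong continuity fails at $x$. Then there exist $\epsilon > 0$ and a sequence $(x_k)$ in $X$ with $d(x_k, x) \to 0$ such that every $y_k \in g(x_k)$ satisfies $\rho(y, y_k) \geq \epsilon$. For each $k$, pick such a $y_k$; this is possible because $x_k \in X = f(Y)$ forces $g(x_k)$ to be nonempty.

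Now invoke compactness of $Y$: the sequence $(y_k)$ admits a subsequence $(y_{k_j})$ converging to some $y^* \in Y$. By continuity of $f$, we have $f(y^*) = \lim_j f(y_{k_j}) = \lim_j x_{k_j} = x$, so $y^* \in g(x) = \{y\}$, i.e., $y^* = y$. But then $\rho(y, y_{k_j}) \to 0$, contradicting $\rho(y, y_{k_j}) \geq \epsilon$ for all $j$.

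There is no real obstacle here — the essential content is that a continuous map from a compact space has a closed graph in a strong enough sense, and the singleton assumption removes the only ambiguity in interpreting the continuity statement. The compactness of $Y$ is the only non-trivial hypothesis being used; everything else is formal. In the application to $f = f_A$ on $Y = \mathbb{C}\P^{n-1}$, compactness is of course immediate.
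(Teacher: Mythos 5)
Your proof is correct, and it is exactly the standard sequential-compactness argument that the paper invokes without writing out (the paper merely remarks that the local version is proved "literally the same" as the classical fact that a continuous injection on a compact set has continuous inverse). Your explicit handling of the two small points the paper leaves implicit — that the singleton hypothesis collapses strong to weak continuity, and that $g(x_k)$ is nonempty because $x_k$ lies in the image $f(Y)$ — is fine.
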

For injective $f$ Proposition~\ref{comp} means simply that its inverse $g$ is strongly continuous on $X$. This ``global'' result is standard, and the proof of the ``local'' version is literally the same.

Due to the already mentioned multivalued nature of $f_A^{-1}$, the applicability of Proposition \ref{comp} in our setting is rather limited: it can possibly be applicable only to boundary points of $F(A)$ and, as we will see, not all of them. So, in Section~\ref{section:main} we propose an alternative approach. With the help of the latter, we show that $f_A^{-1}$ is strongly continuous on the interior of $F(A)$ for all $A$, and on the whole $F(A)$ for convexoid (in particular, normal) and all 2-by-2 matrices. Examples of matrices of bigger size, for which strong continuity fails while weak continuity persists, or even weak continuity fails, are given in Section \ref{section:further}.

\section{Main results} \label{section:main}
It is convenient for us to express $f_A(x) = x^*A x$, and allow the
domain of $f_A$ to be either $\CS^n$ or $\C \P^{n-1}$ under the
convention that $x$ can be any representative of the equivalence
class of unit vectors under unimodular scaling. The following
theorem will imply the strong continuity of $f_A^{-1}$ at many $z \in
F(A)$.

\begin{theorem} \label{scaling}
Let $A \in M_n(\C)$ and suppose that $z = x^*Ax$, in which $x \in \C
S^n$. For any neighborhood $U$ of $x$ in $\C S^n$ there is a
constant $\delta > 0$ such that $ \delta F(A) + (1-\delta)z \subseteq
f_A(U)$.
\end{theorem}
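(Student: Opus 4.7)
The plan is to reduce the problem to the $2 \times 2$ case using two-dimensional compressions and then to globalize via a compactness argument. For each $w \in F(A)$ with $w \neq z$, I would write $w = y^*Ay$ for some $y \in \CS^n$; since $w \neq z$, the vector $y$ is not a scalar multiple of $x$, so $V_y := \span\{x, y\}$ is two-dimensional. Fixing the orthonormal basis $\{x, \hat y\}$ of $V_y$, where $\hat y$ is the normalized component of $y$ in $x^\perp$, I let $M_y \in M_2(\C)$ denote the compression of $A$ to $V_y$ in this basis. A direct computation shows $u^*Au = u^*M_y u$ for every $u \in V_y$, so $F(M_y) \subseteq F(A)$ and both $z$ and $w$ lie in $F(M_y)$.

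The first main step is to establish the theorem for the $2 \times 2$ matrix $M_y$ at the preimage $x$ (now viewed as $(1,0)^T$ in the basis $\{x, \hat y\}$), with neighborhood $U \cap V_y$. In the $2 \times 2$ setting, $\CS^2/\sim$ is two-dimensional and the map to the (possibly degenerate) ellipse $F(M_y) \subseteq \C$ is amenable to explicit analysis. If $z \in \inter F(M_y)$, the map $f_{M_y}$ is open at $x$ as a map to $\C$, and so $f_{M_y}(U \cap V_y)$ contains a $\C$-disk around $z$. If $z$ lies on the relative boundary of $F(M_y)$, then $x$ is the unique preimage of $z$ (modulo phase), and an explicit parametrization of $\CS^2/\sim$ near $x$ exhibits a relative neighborhood of $z$ in $F(M_y)$ contained in $f_{M_y}(U \cap V_y)$; the degenerate segment and singleton cases are easier still. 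This yields $\delta_y > 0$ with $\delta_y F(M_y) + (1-\delta_y)z \subseteq f_A(U)$, and in particular $(1-\delta_y) z + \delta_y w \in f_A(U)$.

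The second step is to produce a single $\delta$ that works uniformly in $w$. First fix a closed ball $\overline{B_\rho(x)} \subseteq U$ in $\CS^n$; for every two-plane $V_y$ through $x$, the intersection $\overline{B_\rho(x)} \cap V_y$ is a closed ball of the same radius $\rho$ in $V_y \cap \CS^n$. The two-planes $V_y$ through $x$ form a compact space, parametrized by the unit sphere of $x^\perp$ modulo phase (i.e., $\C\P^{n-2}$), on which $M_y$ depends continuously. A quantitative form of the $2 \times 2$ case in which $\delta_y$ is lower-semicontinuous in $M_y$ then gives $\delta := \inf_y \delta_y > 0$ by compactness, and this $\delta$ satisfies $\delta F(A) + (1-\delta)z \subseteq f_A(U)$.

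The hardest part will be the uniformity step: although $M_y$ varies continuously in $y$, the qualitative shape of $F(M_y)$ and the position of $z$ relative to $\partial F(M_y)$ can change discontinuously --- for instance, as $F(M_y)$ collapses to a segment when $M_y$ becomes normal, or as $z$ transitions between the interior and the boundary of $F(M_y)$. Tracking how the constant $\delta_y$ depends on $M_y$ through such transitions, and verifying that it remains bounded away from zero, is the most delicate part of the argument.
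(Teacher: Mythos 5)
Your reduction to the two-dimensional compressions $M_y$ of $A$ onto $\span\{x,y\}$ is exactly the paper's first move, but the proposal has a genuine gap precisely where you flag it: the uniformity of $\delta_y$ over $y$. You propose to get a positive infimum from compactness together with ``a quantitative form of the $2\times 2$ case in which $\delta_y$ is lower-semicontinuous in $M_y$,'' but that lower semicontinuity is not something you can expect to extract from the continuity of $y\mapsto M_y$ alone. The optimal constant $\delta_y$ is defined by a containment of one $y$-dependent set ($\delta F(M_y)+(1-\delta)z$) in another ($f_{M_y}(U\cap V_y)$), and Hausdorff continuity of both sets does not make such a containment stable under perturbation: the qualitative case analysis you describe (interior point, boundary point with unique preimage, collapse of the ellipse to a segment or a point) is exactly the kind of structure through which a naively defined $\delta_y$ could degenerate, and nothing in the proposal rules that out. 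Since the whole content of the theorem is the existence of a single $\delta$ valid for all of $F(A)$, this unproved step is the theorem, not a technicality. (There is also a minor wrinkle: $M_y$ is only defined up to a phase in the off-diagonal entries, so it is not a continuous function on $\C\P^{n-2}$; one should either work on the unit sphere of $x^\perp$ or observe that $\delta_y$ is phase-invariant.)

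The paper avoids this entirely by making the $2\times 2$ estimate explicit and independent of $M_y$. It passes to rank-one projections $yy^*$, where each two-plane through $x$ yields a round $2$-sphere of radius $1/2$ (the set $\{vv^*: v\in V,\ \|v\|=1\}$), and $f_A$ becomes the restriction of the single linear map $X\mapsto\tr(AX)$. A fixed neighborhood $U$ of $x$ contains, on every such sphere, the spherical cap of a fixed chordal radius $\epsilon/2$, and the spherical cap lemma (Lemma \ref{sphericalCap}) shows that any linear image of that cap contains the homothety of the image of the whole sphere with ratio $\epsilon^2/4$ centered at the image of $x$ --- uniformly over all linear maps, including degenerate ones where the image is a segment or a point. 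That single constant $\delta=\epsilon^2/4$ then works for every $y$ simultaneously, with no case analysis and no semicontinuity argument. If you want to complete your program, the missing ingredient is precisely such a uniform quantitative bound for the $2\times 2$ case; establishing it essentially forces you to reprove the spherical cap lemma.
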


To prove Theorem \ref{scaling}, we use the following lemma
concerning the image of a spherical cap under a linear
transformation.
\begin{lemma} \label{sphericalCap}
Let $S$ be the surface of a sphere with radius $r$ in $\R^3$ and let $x
\in S$.  For any $\epsilon \in [0, 2]$, let $C_\epsilon = \{y \in S: ||x-y|| \le
\epsilon r\}$.  If $T\colon \R^3 \rightarrow \R^2$ is a linear
transformation, then $\frac{1}{4}\epsilon^2 T(S) + (1 -
\frac{1}{4}\epsilon^2)T(x) \subset T(C_\epsilon)$.
\end{lemma}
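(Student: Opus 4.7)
The inclusion to be proved says that for each $y\in S$ the point $\tfrac14\epsilon^2 T(y)+(1-\tfrac14\epsilon^2)T(x)$ lies in $T(C_\epsilon)$. Writing $t=\tfrac14\epsilon^2$ and $q=(1-t)x+ty$, this amounts to producing some $y'\in C_\epsilon$ with $y'-q\in\ker T$. Translating by a constant vector leaves the difference $y'-q$ unchanged, so I may assume $S$ is centered at the origin, giving $\|x\|=\|y\|=r$.

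The key step is to locate $q$ relative to $S$. Using $x\cdot y=r^2-\tfrac12\|x-y\|^2$, a direct expansion yields
\[
\|q\|^2=r^2-t(1-t)\|x-y\|^2,\qquad x\cdot q=r^2-\tfrac12 t\|x-y\|^2.
\]
The first identity shows $q$ lies in the closed ball of radius $r$. The second, combined with the diameter bound $\|x-y\|\le 2r$ and the choice $t=\tfrac14\epsilon^2$, gives $x\cdot q\ge r^2(1-\tfrac12\epsilon^2)$. Thus $q$ lies in the closed half-space $H^+:=\{z\in\R^3:x\cdot z\ge r^2(1-\tfrac12\epsilon^2)\}$, whose intersection with $S$ is precisely $C_\epsilon$.

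Since $T\colon\R^3\to\R^2$ has $\dim\ker T\ge 1$, I pick any unit vector $u\in\ker T$ and form $\ell=q+\R u$. Because $q$ lies in the closed ball, $\ell$ meets $S$ in one or two points $y_1,y_2$, with $q$ on the closed segment $[y_1,y_2]$. The open complement $\R^3\setminus H^+$ is convex, so $y_1,y_2$ cannot both lie there (otherwise $q$ would as well). Hence at least one of them lies in $H^+\cap S=C_\epsilon$; taking $y'$ to be that point, $y'-q\in\ker T$ by construction, so $T(y')=T(q)$, as required.

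The main technical step is verifying the two identities above and checking that the specific value $t=\tfrac14\epsilon^2$ is exactly what makes the half-space bound $x\cdot q\ge r^2(1-\tfrac12\epsilon^2)$ follow from the crude estimate $\|x-y\|\le 2r$; the rest of the argument is elementary geometry of lines, spheres, and half-spaces.
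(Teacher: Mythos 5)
Your proof is correct, and it takes a genuinely different --- and more elementary --- route through the key step. Both arguments share the first half: your identities $\|q\|^{2}=r^{2}-t(1-t)\|x-y\|^{2}$ and $x\cdot q=r^{2}-\tfrac12 t\|x-y\|^{2}$ are precisely the computation behind the paper's unproved assertion that $\tfrac14\epsilon^{2}S+(1-\tfrac14\epsilon^{2})x\subset\conv C_\epsilon$, because $\conv C_\epsilon$ is exactly the closed ball intersected with your half-space $H^{+}$. Where the paper then establishes $T(\conv C_\epsilon)=T(C_\epsilon)$ by a topological argument (the image of the boundary circle $\partial C_\epsilon$ is an ellipse or segment, points inside it lie in the simply connected set $T(C_\epsilon)$, and a Jordan-type separation argument identifies $\conv C_\epsilon$ with $C_\epsilon\cup\conv\partial C_\epsilon$ plus its interior), you avoid topology entirely: you slide $q$ along a line in $\ker T$ (nontrivial since $T$ maps $\R^{3}$ to $\R^{2}$) until it meets the sphere, and convexity of the open complementary half-space forces at least one endpoint of the resulting chord to remain in $H^{+}\cap S=C_\epsilon$. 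Your version is fully self-contained and arguably tighter than the paper's somewhat terse appeal to simple connectivity; the only price is that the fiber argument is tied to the codimension-one situation $\dim\ker T\ge 1$, which is all the lemma requires. The degenerate cases (tangent line when $\|q\|=r$, and $\epsilon\in\{0,2\}$) are harmless and your phrasing already accommodates them.
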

\begin{proof}
It is sufficient to prove that $T(C_\epsilon) = T(\conv C_\epsilon)$
since $\frac{1}{4}\epsilon^2 S + (1 - \frac{1}{4}\epsilon^2)x \subset
\conv C_\epsilon$.
Let $\partial C_\epsilon = \{y \in S : ||x-y|| = \epsilon r \}$.  Since
$\partial C_\epsilon$ is a circle in $\R^3$, $T(\partial C_\epsilon)$ is
either a line segment or an ellipse.  Any points inside the ellipse
$T(\partial C_\epsilon)$ are contained in $T(C_\epsilon)$ because
$T(C_\epsilon)$ must be simply connected.  Thus $T(C_\epsilon) =
T(C_\epsilon \cup \conv \partial C_\epsilon)$.  The set $C_\epsilon
\cup \conv \partial C_\epsilon$ is homeomorphic to $S^2$, so it
separates $\R^3$ into interior and exterior components.  The union of
$C_\epsilon \cup \conv \partial C_\epsilon$ with its interior
component is precisely $\conv C_\epsilon$.  It follows that $T(\conv
C_\epsilon) = T(C_\epsilon \cup \conv \partial C_\epsilon) =
T(C_\epsilon)$.  
\end{proof}

\begin{proof}[Proof of Theorem \ref{scaling}]
Let $\Sigma = \{yy^* : y \in \C S^n \}$.  The set $W = \{yy^* : y \in U\}$
is a neighborhood around $xx^*$ in $\Sigma$.  Therefore, there exists
$\epsilon > 0$ such that the set $B_\epsilon = \{yy^* \in \Sigma :
||xx^*-yy^*|| \le \frac{\epsilon}{2} \}$ is contained in $W$.

Choose any $y \in \C S^n$. Let $V = \span \{x,y\}$.  For linearly
independent $x$ and $y$, the set $\Omega_V = \span \{v v^* : v \in V
\}$ is a subspace with 4 real dimensions.  The intersection of
$\Omega_V$ with $\Sigma$ is the surface of a sphere with radius
$\frac{1}{2}$ in the 3 dimensional affine subspace consisting of
matrices with trace 1 \cite{Dav71}.  Let $\hat{f}_A$ denote the linear
map $X \mapsto \tr(AX)$ where $X$ is any $n$-by-$n$ Hermitian
matrix. Note that $f_A(v) = \hat{f}_A(vv^*)$ for all $v \in \C^n$.  By
Lemma \ref{sphericalCap}, $\frac{\epsilon^2}{4} \hat{f}_A(yy^*) +
(1-\frac{\epsilon^2}{4})\hat{f}_A(xx^*) \in  \hat{f}_A(B_\epsilon \cap
\Omega_V) \subset \hat{f}_A(W)$. Therefore $\frac{\epsilon^2}{4}
f_A(y) + (1-\frac{\epsilon^2}{4}) f_A(x) \in f_A(U)$.  Since $\epsilon$
does not depend on the choice of $y$, we have shown that
$\frac{\epsilon^2}{4} F(A) + (1-\frac{\epsilon^2}{4}) z \subseteq
f_A(U)$.
\end{proof}
According to Theorem~\ref{scaling}, the openness of the mapping
$f_A$ at $x$ is guaranteed whenever the scaled sets $z+\delta(F(A)-z)$
contain a full neighborhood of $z=f_A(x)$  for all $\delta>0$.
Consequently:
\begin{thm}\label{th:cont}
Let $A \in M_n(\C)$. Then $f_A^{-1}$ is strongly continuous on $F(A)$ except, perhaps, at the round points of its boundary $\partial F(A)$.
\end{thm}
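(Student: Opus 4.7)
The plan is to leverage Theorem~\ref{scaling} together with the observation noted immediately after its proof: the openness of $f_A$ at $x\in f_A^{-1}(z)$ (which is what strong continuity of $f_A^{-1}$ at $z$ amounts to) is guaranteed once one shows that the scaled copy $S_\delta:=z+\delta(F(A)-z)$ contains a relative neighborhood of $z$ in $F(A)$ for every $\delta>0$. The proof thus reduces to a purely convex-geometric statement about the local shape of $F(A)$ at $z$.

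I would verify this relative-neighborhood property in three exhaustive cases covering every non-round $z\in F(A)$. If $z\in\inter F(A)$, there is nothing to show, because the homothety about $z$ keeps $z$ interior and the inradius of $F(A)$ at $z$ is simply scaled by $\delta$. If $z$ lies in the relative interior of a flat portion of $\partial F(A)$, let $\ell$ denote the line carrying that portion, and $\ell^-$ the closed half-plane containing $F(A)$; then, for small $\rho>0$, $F(A)\cap B(z,\rho)=\ell^-\cap B(z,\rho)$. The homothety about $z$ preserves $\ell$ and hence $\ell^-$, so $S_\delta$ locally lies on the $\ell^-$ side of $\ell$ and contains $\ell^-\cap B(z,\delta\rho)$, which is a relative neighborhood of $z$ in $F(A)$.

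The last case, in which $z$ is a corner of $\partial F(A)$, is where the specific structure of fields of values must be used. Here I would invoke the classical theorem (Donoghue) that every corner of $F(A)$ is a reducing eigenvalue of $A$: after a unitary change of basis one has $A\cong zI_k\oplus A'$ with $z\notin F(A')$, so $F(A)=\conv(\{z\}\cup F(A'))$. The boundary of $F(A)$ near $z$ then consists of the two tangent segments from $z$ to $F(A')$, and $F(A)$ is locally a wedge cut out by two supporting lines through $z$. Both of these lines are preserved by the homothety about $z$, so $S_\delta$ is locally the same wedge and contains a relative neighborhood of $z$ in $F(A)$.

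I expect the corner case to be the main obstacle, because for a generic convex body the scaling argument genuinely fails at a corner whose boundary arcs are strictly curved: one can produce a sequence of boundary points of $F(A)$ converging to $z$ that never enters $S_\delta$, so $S_\delta$ cannot contain any relative neighborhood of $z$. What rescues the argument is the Donoghue structural result, which forces the two arcs of $\partial F(A)$ issuing from a corner to be straight line segments and restores the wedge picture above. By the same token, at a round point $\partial F(A)$ is a smooth, strictly curved arc through $z$ and no such rescue is available, which is precisely why such points are excluded from the conclusion.
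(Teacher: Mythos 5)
Your argument is essentially the paper's: the authors likewise deduce the theorem directly from Theorem~\ref{scaling} by observing that at a non-round point the homothetic copy $z+\delta(F(A)-z)$ contains a relative neighborhood of $z$ in $F(A)$ for every $\delta>0$, and your three-case verification simply makes that observation explicit. The one extra ingredient, Donoghue's theorem at corners, is not actually needed for the statement as written --- the paper \emph{defines} a boundary point to be non-round precisely when both one-sided neighborhoods of $z$ in $\partial F(A)$ are line segments, so the wedge structure at a corner is part of the hypothesis rather than something to be derived --- although it is exactly the fact that justifies the abstract's claim that all corner points of $F(A)$ are covered.
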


Note that, in our terminology, $z\in\partial F(A)$ is {\sl not} a round point of $\partial F(A)$ if and
only if {\em both} one-sided neighborhoods of $z$ in $\partial F(A)$
are line segments when the radius of the neighborhood is sufficiently
small. For {\em convexoid} matrices \cite{Hal82}, by definition, $F(A)$
is the convex hull of the spectrum of $A$ and thus a polygon. There
are no round point  in $\partial F(A)$ in this case, so that the following
statement holds.
\begin{cor}\label{co:norm}Let $A \in M_n(\C)$ be convexoid. Then $f_A^{-1}$ is strongly continuous on $F(A)$.\end{cor}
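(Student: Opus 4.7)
The plan is to reduce the statement directly to Theorem~\ref{th:cont} by checking that a convexoid matrix has no round boundary points at all. First I would invoke the definition: $A$ convexoid means $F(A) = \conv \sigma(A)$, so $F(A)$ is the convex hull of the finite set $\sigma(A) \subset \C$, i.e. a convex polygon (possibly degenerate, reducing to a segment or a point when the eigenvalues are collinear, but the argument below still applies).

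Next I would argue that no point of $\partial F(A)$ is round. Fix $z \in \partial F(A)$. Because $F(A)$ is a polygon, $z$ lies on at least one edge. If $z$ is in the relative interior of an edge $E$, then for all sufficiently small radii the two one-sided neighborhoods of $z$ in $\partial F(A)$ are subsegments of $E$, hence line segments. If instead $z$ is a vertex, then $z$ is the common endpoint of two adjacent edges $E_1, E_2$, and for sufficiently small radius one one-sided neighborhood lies in $E_1$ and the other in $E_2$; again both are line segments. In either case, by the characterization recalled immediately after Theorem~\ref{th:cont}, $z$ fails to be a round point.

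Finally, since $\partial F(A)$ contains no round points, Theorem~\ref{th:cont} applies at every $z \in \partial F(A)$, and the same theorem already handles $z \in \inter F(A)$. Hence $f_A^{-1}$ is strongly continuous on all of $F(A)$.

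I do not anticipate a real obstacle here; the only thing to be careful about is the slightly counterintuitive convention that a corner of the polygon is \emph{not} a round point (both one-sided boundary neighborhoods are still line segments, just lying on different edges), which is exactly what the definition following Theorem~\ref{th:cont} says.
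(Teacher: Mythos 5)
Your proposal is correct and follows exactly the paper's route: convexoid implies $F(A)=\conv\sigma(A)$ is a polygon, a polygon has no round boundary points (corners included, by the convention stated after Theorem~\ref{th:cont}), and Theorem~\ref{th:cont} then gives strong continuity everywhere. You merely spell out the edge-interior and vertex cases more explicitly than the paper does.
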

Of course, this result covers all normal matrices $A$.

It is possible for the mapping $f_A^{-1}$ to be strongly continuous even at the
round points of $F(A)$. According to Proposition~\ref{comp}, this will be the case, in particular, when the preimage of $z \in F(A)$ { is} contained in a one-dimensional set. As was
observed, e.g., in \cite{Tsi84}  and \cite{RS11}, this property is
possessed by all boundary points of $F(A)$  for non-normal 2-by-2 matrices $A$.
Combining this observation with the result for normal matrices, we
arrive at
\begin{cor}\label{co:n2}
The mapping $f_A^{-1}$ is strongly continuous for all $2$-by-$2$ matrices
$A$.
\end{cor}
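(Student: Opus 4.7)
The plan is to split the 2-by-2 matrix $A$ into two exhaustive cases based on normality.

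\textbf{Case 1: $A$ is normal.} Then $F(A)$ is the (possibly degenerate) line segment joining the two eigenvalues of $A$, which is exactly $\conv \sigma(A)$, so $A$ is convexoid. Corollary~\ref{co:norm} applies directly and delivers strong continuity of $f_A^{-1}$ on all of $F(A)$.

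\textbf{Case 2: $A$ is non-normal.} By the classical Elliptic Range Theorem, $F(A)$ is a filled ellipse (with non-empty interior) whose foci are the eigenvalues of $A$. For $z \in \inter F(A)$, strong continuity follows immediately from Theorem~\ref{th:cont}, since interior points are not round points of $\partial F(A)$. The boundary $\partial F(A)$, being an ellipse, consists \emph{entirely} of round points (no corners, no flat portions), so Theorem~\ref{th:cont} contributes nothing there and the boundary case must be handled separately.

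For the boundary case, I would invoke the observation already cited in the excerpt from \cite{Tsi84} and \cite{RS11}: for a non-normal $2$-by-$2$ matrix, the preimage $f_A^{-1}(z) \subset \C\P^1$ is a singleton for every $z \in \partial F(A)$. Once this is in hand, Proposition~\ref{comp} (applied with $Y = \C\P^1$ compact and $f = f_A$ continuous) immediately gives strong continuity of $f_A^{-1}$ at each such boundary $z$. Combining the interior and boundary pieces covers all of $F(A)$ in Case 2, and the two cases together exhaust all $2$-by-$2$ matrices.

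The step I expect to be the main obstacle, if a fully self-contained argument were desired, is verifying the singleton preimage claim on $\partial F(A)$ for non-normal $A$. A direct proof proceeds by putting $A$ in upper-triangular (Schur) form, writing the Hermitian and skew-Hermitian parts explicitly, and parametrizing unit vectors $x = (\cos\theta, e^{i\varphi}\sin\theta)^T \in \C\P^1$; the image $f_A(x)$ traces the full ellipse exactly once along the equator $\theta = \pi/4$ (say), and one checks by a Lagrange-multiplier or direct computation that no other $(\theta,\varphi)$ pair lands on $\partial F(A)$. Since the excerpt treats this as a known fact, however, the proof of the corollary reduces to assembling the three previously established results.
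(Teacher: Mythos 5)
Your argument is correct and is essentially the paper's own derivation of this corollary: the normal case via Corollary~\ref{co:norm}, the interior via Theorem~\ref{th:cont}, and the boundary of the elliptical disk via the one-dimensional (singleton in $\C\P^{1}$) preimage fact from \cite{Tsi84}, \cite{RS11} combined with Proposition~\ref{comp}. (The paper additionally records a self-contained ``direct proof'' using the geometry of $\{xx^{*}: x\in\C S^{2}\}$ as a $2$-sphere, but that is offered as an alternative to the route you took.)
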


The latter result can, of course, be proved directly:
\begin{proof}[Direct proof of Corollary \ref{co:n2}]
The map $x \mapsto xx^*$ is a continuous bijection from the compact
space $\C\P^1$ onto $S = \{xx^* : x \in \C^2, x^*x=1\}$. Thus $\C\P^1$
and $S$ are homeomorphic.  As noted in \cite{Dav71}, $S$ is a
2-sphere and the real linear transformation $\hat{f}_A(X) = \tr(AX)$
maps $S$ onto $F(A)$. In the case where the field of values is an
ellipse (i.e., $A$ is not normal), the linear transformation $\hat{f}_A$ is
a composition of an orthogonal projection onto a two dimensional
subspace composed with an invertible real linear transformation of
that subspace into $\C$.  The projection divides $S$ into a union of
two closed hemispheres that are each mapped bijectively onto
$F(A)$.  Since $\hat{f}_A$ is continuous and $S$ is compact, it follows
that $\hat{f}_A$ is a homeomorphism on each hemisphere.

If $F(A)$ is a line segment, then $\hat{f}_A$ maps two antipodal points
of $S$ onto the two endpoints of $F(A)$.  Any geodesic arc of $S$
connecting the two antipodal points is mapped bijectively onto
$F(A)$.  Since $\hat{f}_A$ is continuous and $S$ is compact, it follows
that $\hat{f}_A$ is a homeomorphism on each such arc.
\end{proof}

\section{Further observations} \label{section:further}

Starting with $n=3$, the strong continuity of $f_A^{-1}$ may
indeed fail at round points of $\partial F(A)$ having multiple linearly
independent preimages under $f_A$.
\begin{thm}\label{th:nocont}
Let $A_1$, $A_2$ be two matrices such that $F(A_1)$
and $F(A_2)$ have a common support line at some point $z\in
\partial F(A_1)\cap \partial F(A_2)$ but not at any other point in some neighborhood of $z$.
Then {\em : (i)} $f_A^{-1}$ is not strongly continuous at $z$ for $A=A_1\oplus A_2$. {\em (ii)}
If in addition $z$ is a limit point for both $\partial F(A)\cap\partial F(A_1)$ and $\partial F(A)\cap\partial F(A_2)$, then $f_A^{-1}$ is not even weakly continuous at $z$.
\end{thm}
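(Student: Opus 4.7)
My plan is to exploit the block-diagonal structure of $A = A_1 \oplus A_2$: a unit vector $y = (y_1, y_2) \in \C S^{n_1+n_2}$ satisfies $f_A(y) = \|y_1\|^2\,p + \|y_2\|^2\,q$, where $p, q$ are the normalized Rayleigh quotients of $y_1, y_2$ in $F(A_1), F(A_2)$ (understood trivially when a component is zero). Preimages of a point $z' \in F(A)$ therefore correspond to convex decompositions $z' = \alpha p + (1-\alpha) q$ with $\alpha = \|y_1\|^2 \in [0,1]$, $p \in F(A_1)$, $q \in F(A_2)$, paired with choices of $f_{A_i}$-preimages for $p$ and $q$.

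The central tool will be a support-line lemma: for $z' \in \partial F(A)$ in some small neighborhood of $z$ with $z' \neq z$, every such decomposition must be trivial ($\alpha \in \{0,1\}$). Indeed, if $\alpha \in (0,1)$, then any support line of $F(A)$ at $z'$ passes through both $p$ and $q$ and so supports $F(A_1)$ at $p$ and $F(A_2)$ at $q$; being distinct from $\ell$, it would be a second common support line near $z$, violating the hypothesis. Consequently preimages of such $z'$ lie entirely in $\C^{n_1}\oplus\{0\}$ (if $z' \in F(A_1)$) or $\{0\}\oplus\C^{n_2}$ (if $z' \in F(A_2)$).

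For part (i), I first argue that the hypothesis rules out $\partial F(A)$ locally coinciding with $\partial F(A_1) \cap \partial F(A_2)$ near $z$ (such coincidence would furnish common support lines throughout a whole arc). Hence some sequence $z_k \to z$ lies in $\partial F(A) \setminus F(A_i)$ for some $i$; after swapping $A_1$ and $A_2$ if necessary, take $i = 1$. The lemma then forces all preimages of $z_k$ to lie in $\{0\} \oplus \C^{n_2}$. Choosing $y_0 = (x_1, 0) \in f_A^{-1}(z)$ for any $x_1 \in f_{A_1}^{-1}(z)$, the distance in $\C\P^{n-1}$ from $y_0$ to this orthogonal subspace is a fixed positive constant, so no sequence of preimages of $z_k$ can approach $y_0$ and strong continuity of $f_A^{-1}$ at $z$ fails.

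For part (ii), the added hypothesis supplies sequences $z_k^{(i)} \to z$ in $(\partial F(A) \cap \partial F(A_i)) \setminus \{z\}$ for each $i \in \{1,2\}$. The same support-line argument rules out $z_k^{(i)} \in F(A_{3-i})$, so all preimages of $z_k^{(i)}$ lie in the $i$-th block. At $z$ itself, $z = \alpha p + (1-\alpha)q$ with $p \in F(A_1)$, $q \in F(A_2)$ forces $p = q = z$ because $\ell \cap F(A_i) = \{z\}$ locally; hence every $y \in f_A^{-1}(z)$ has the form $(\sqrt{\alpha}\,\tilde y_1,\ \sqrt{1-\alpha}\,e^{i\theta}\tilde y_2)$ with $\alpha \in [0,1]$, $\theta \in \R$, and $\tilde y_i \in f_{A_i}^{-1}(z)$. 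A direct calculation shows that the $\C\P^{n-1}$-distance from such $y$ to any first-block unit vector is bounded below by a positive constant whenever $\alpha < 1$, and to any second-block unit vector by a positive constant whenever $\alpha > 0$; at least one of these conditions holds for every $\alpha \in [0,1]$, so the matching sequence $z_k^{(i)}$ shows that no preimage of $z_k^{(i)}$ comes within a fixed distance of $y$. Since this succeeds for \emph{every} $y \in f_A^{-1}(z)$, weak continuity fails at $z$. The main obstacle I expect is the precise reading of the hypothesis — ensuring $\ell$ is the only line that simultaneously supports $F(A_1)$ and $F(A_2)$ at points near $z$ — and the careful treatment of the edge case where $\ell$ contains a line segment of $\partial F(A_i)$ through $z$, which would enlarge $f_A^{-1}(z)$ beyond the normal form described above.
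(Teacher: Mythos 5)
Your argument is essentially the paper's: both rest on writing $f_A(y)=\norm{y_1}^2p+\norm{y_2}^2q$ with $p\in F(A_1)$, $q\in F(A_2)$, on the observation that preimages of boundary points of $F(A)$ lying outside one of the $F(A_i)$ are confined to the complementary block, and on the resulting separation of those preimages from a preimage of $z$ supported in the other block (part (i)), respectively from every preimage of $z$, since each has a nonzero component in at least one block (part (ii)). The one caveat is exactly the point you flag yourself: your support-line lemma is proved by reading the hypothesis as excluding any line that supports $F(A_1)$ and $F(A_2)$ at possibly \emph{distinct} points near $z$ (note also that the clause ``being distinct from $\ell$'' is neither needed nor justified --- the same appeal to the hypothesis disposes of $m=\ell$ as well); under the literal reading --- no common support line at another \emph{common} boundary point --- the lemma, and likewise the paper's unproved assertion that all vectors in $f_A^{-1}(\gamma)$ have zero second component, can fail when $\ell$ contains boundary segments of the $F(A_i)$ through $z$ (e.g. two triangles meeting at a shared vertex $z$ lying on a common edge line, where $z$ is then a non-round point of $\partial F(A)$ and strong continuity actually holds by Theorem~\ref{th:cont}). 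The paper's proof makes the same implicit assumption more tersely, so your proposal matches it in both substance and level of rigor.
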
 
\begin{proof} (i) Take $x_j\in f_{A_j}^{-1}(z)$, $j=1,2$, and consider a neighborhood $U$ of $z$ so small
that $\partial F(A_1)\cap\partial F(A_2)\cap U=\{z\}$. Switching the
indices $j=1,2$ if needed, we without loss of generality may suppose
that a one-sided neighborhood $\gamma$ of $z$ in say $\partial
F(A_1)$ lies outside of $F(A_2)$. But then all the vectors in
$f_A^{-1}(\gamma)$ must have zero second component, and are
therefore separated from $0\oplus x_2$. This violates \eqref{cont} at
$z$.

(ii) If the additional condition holds, then there also exists a one sided neighborhood $\widetilde{\gamma}$ of $z$ in $\partial
F(A_2)$ that lies outside of $F(A_1)$. Consequently,  all the vectors in
$f_A^{-1}(\widetilde{\gamma})$ must have zero first component. So, for any choice of $x\in f_A^{-1}(z)$ and its small neighborhood in $\C S^n$ the set $f_A(U)$ will miss either $\gamma$ or $\widetilde{\gamma}$ and thus will not be relatively open in $F(A)$.\end{proof}

{ Of course, we need $n\geq 3$ in order for conditions of Theorem~\ref{th:nocont} to hold ($n\geq 4$ for its part (ii)). \begin{example}\label{ex:3x3} Let $A_1$ be a 2-by-2 not normal matrix and $A_2=[z]$, where $z\in\partial F(A_1)$. Then all vectors $x\in f_A^{-1}(\zeta)$ with $\zeta\in\partial F(A_1)$, $\zeta\neq z$ have the zero third coordinate. Therefore, they lie at the distance $\sqrt{2}$ from
$[0,0,1]^T$, one of the vectors in $f_A^{-1}(z)$. Thus, $f_A^{-1}$ is not strongly continuous at the point $z$. Note however that, since $f_{A_1}^{-1}$ is strongly continuous on $F(A_1)=F(A)$, the mapping $f_A$ is open at $[f_{A_1}^{-1}(z), 0]^T$, so that $f_A^{-1}$ is weakly continuous at $z$. \end{example} }

The simplest example of this sort is delivered by \[
A_1=\left[\begin{matrix} 0 & 2\\ 0 & 0\end{matrix}\right],  A_2=[1], \text{
so that } A=\left[\begin{matrix} 0 & 2 & 0\\ 0 & 0 & 0 \\ 0 & 0 &
1\end{matrix}\right] { \text{ and } z=1}. \]

{ \begin{example}\label{ex:4x4}  Let \[ A_1 = \left[\begin{matrix} 0 & ik \\ ik & 1+ib\end{matrix}\right], \quad A_2=\left[\begin{matrix} 0 & ik \\ ik & 1-ib\end{matrix}\right] \text{ with } b,k>0, \text { and } A=A_1\oplus A_2. \] Then some deleted neighborhood of $z=0$ in the intersection of $\partial F(A)$ with the upper (lower) half plane lies in $F(A_1)\setminus F(A_2)$ (respectively, $F(A_2)\setminus F(A_1)$). Thus, weak continuity of $f_A^{-1}$ fails at $z=0$.  \end{example} }

\begin{figure}[ht]
\includegraphics[scale=1]{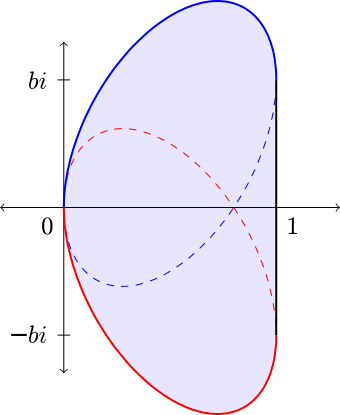}

\caption{$F(A)$ from Example \ref{ex:4x4} with dashed lines indicating the boundaries of $F(A_1)$ and $F(A_2)$.  Weak continuity fails at $z=0$. }
\end{figure}

Recall that a matrix is \textit{unitarily reducible}, see e.g. \cite[p. 60]{HJ2}, if and only if there exist a unitary matrix $U$ such that $U^*AU$ is the direct sum of two smaller matrices.  All matrices satisfying conditions of Theorem~\ref{th:nocont} are
unitarily reducible by construction. { As it happens, for $n=3$ strong continuity of $f_A^{-1}$ actually can fail only for unitarily reducible matrices.}
\begin{thm}\label{th:n3}Let $A$ be an unitarily irreducible $3$-by-$3$ matrix. Then $f_A^{-1}$
is strongly continuous on $F(A)$.
\end{thm}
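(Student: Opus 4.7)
The plan is to reduce to Proposition~\ref{comp}: by Theorem~\ref{th:cont}, strong continuity of $f_A^{-1}$ holds everywhere on $F(A)$ except possibly at round points of $\partial F(A)$, and Proposition~\ref{comp} guarantees it at any $z$ for which $f_A^{-1}(z)$ is a singleton in $\C\P^2$. I therefore aim to show that, under the unitary-irreducibility hypothesis, the preimage of every round boundary point is a singleton.

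Fix a round $z\in\partial F(A)$ and choose $\theta$ so that the line through $z$ with outward normal $e^{i\theta}$ supports $F(A)$. Set $H=\Re(e^{-i\theta}A)$ and let $\lambda=\Re(e^{-i\theta}z)$---the maximum eigenvalue of $H$---have eigenspace $E$. Since $H$ is Hermitian and $\lambda$ is its maximum, $x^*Hx=\lambda$ iff $x\in E$, so $x^*Ax=z$ forces $x\in E$. The preimage is thus controlled by the compression $A_E:=P_EAP_E|_E$, whose field of values $F(A_E)$ lies on the support line and which is consequently a normal matrix.

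Split on $\dim E$. The case $\dim E=1$ gives a singleton directly. If $\dim E=3$ then $H=\lambda I$, which makes $e^{-i\theta}A-\lambda I$ skew-Hermitian, forcing $A$ to be normal and hence unitarily reducible---contradicting irreducibility. This leaves $\dim E=2$. If $F(A_E)$ is a non-degenerate segment with endpoints $z_1\neq z_2$, then any $z$ strictly between $z_1$ and $z_2$ would place the whole segment $[z_1,z_2]\subseteq F(A_E)\subseteq F(A)$ on the support line on both sides of $z$, violating roundness; hence $z\in\{z_1,z_2\}$, whose eigenspace in $E$ is one-dimensional, yielding a singleton preimage.

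The delicate remaining possibility is $F(A_E)=\{z\}$, i.e.\ $A_E=zI_E$, and the plan is to rule it out by deriving unitary reducibility. In an orthonormal basis $\{e_1,e_2,e_3\}$ with $E=\span\{e_1,e_2\}$, write
\[ A=\begin{pmatrix}zI_2 & V\\ W & \omega\end{pmatrix},\qquad V\in\C^{2\times 1},\ W\in\C^{1\times 2}. \]
Expressing $H$ in the same block form, the requirement that $E$ be the $\lambda$-eigenspace of $H$ in all of $\C^3$ forces the off-diagonal blocks of $H$ to vanish, giving $W=-e^{2i\theta}V^*$. Setting $V=(a,b)^T$ (the case $V=0$ already makes $A$ a direct sum), a direct check shows that $v=(-\bar b,\bar a,0)^T$ is an eigenvector of $A$ for the eigenvalue $z$ and that $v^\perp=\span\{(a,b,0)^T,e_3\}$ is also $A$-invariant, yielding unitary reducibility---the desired contradiction. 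The principal obstacle is this final structural calculation; it is elementary, but it depends crucially on aligning the basis with $E$ so that the relation $W=-e^{2i\theta}V^*$ can be exploited to exhibit the invariant decomposition.
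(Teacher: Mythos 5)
Your proof is correct and follows essentially the same route as the paper's: both arguments reduce the problem to a round boundary point whose preimage spans a two\mbox{-}dimensional subspace on which the compression of $A$ is the scalar $zI$, and then show this forces unitary reducibility. The only cosmetic difference is the last step, where the paper translates $z$ to $0$, uses positive semidefiniteness of $H=\Re A$ to kill a row and column and exhibits a common eigenvector of $H$ and $K$, while you exploit the relation $W=-e^{2i\theta}V^*$ to write down the reducing decomposition $\span\{(-\bar b,\bar a,0)^T\}\oplus\span\{(a,b,0)^T,e_3\}$ directly; both computations are valid.
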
 \begin{proof} Suppose not. Then there is a point $z\in F(A)$ at which the strong continuity fails,
and it can only be a boundary round point with $f_A^{-1}(z)$
containing two linearly independent vectors $\xi,\eta$. By translation,
we may without loss of generality arrange that $z=0$, and by further
scaling --- that $F(A)$ is located in the right half plane. In other words,
$A=H+iK$, where $H,K$ are self adjoint, and $H$ in addition is positive
semi-definite.

Let ${\mathcal L}$ be the span of $\xi,\eta$, and let $B$ be the
compression of $A$ onto $\mathcal L$. Since $0\in F(B)\subset F(A)$,
zero must be a boundary round point of $F(B)$. But $\mathcal L$ is
two-dimensional, so that the only way in which a round point in $\partial F(B)$ can have more than one linearly independent preimage under the field of values generating
function is if $B$ is a scalar multiple of the identity. Consequently,
under an appropriate unitary similarity we
get \eq{HK} H, K = \left[\begin{matrix} 0 & 0 & \star \\ 0 & 0 & \star \\
\star & \star & \star\end{matrix}\right].\en Since $H$ is semi-definite,
\eqref{HK} further implies
that \eq{H} H = \left[\begin{matrix} 0 & 0 & 0 \\ 0 & 0 & 0 \\
0 & 0 & \star\end{matrix}\right].\en Yet another unitary similarity,
involving the first two rows and columns only, allows to put $K$ in the
form \eq{K}  K = \left[\begin{matrix} 0 & 0 & 0 \\ 0 & 0 & \star \\
0 & \star & \star\end{matrix}\right]\en without changing \eqref{H}.
Comparing \eqref{H} and \eqref{K} we observe that $H$ and $K$ have
a common eigenvector, which makes $A$ unitarily reducible.
\end{proof}

Observe that the weak continuity persists for all 3-by-3 matrices, unitarily irreducible or not.
\begin{thm} \label{th:3x3}Let $A$ be a 3-by-3 matrix. Then the mapping $f_A^{-1}$ is strongly continuous on $F(A)$ except perhaps at one point $z\in\partial F(A)$ where it is weakly continuous.\end{thm}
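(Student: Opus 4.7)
The plan is to reduce to the unitarily reducible case and isolate the unique potentially bad point. By Theorem~\ref{th:n3}, if $A$ is unitarily irreducible, then $f_A^{-1}$ is strongly continuous on all of $F(A)$ and there is nothing more to show. So assume $A$ is unitarily reducible; after a unitary similarity (which does not affect $f_A^{-1}$) we may write $A = A_1 \oplus [\lambda]$ with $A_1 \in M_2(\C)$. If $A_1$ is normal, then $A$ itself is normal, hence convexoid, and Corollary~\ref{co:norm} gives strong continuity on $F(A)$. We may therefore assume $A_1$ is not normal, in which case $F(A_1)$ is a (nondegenerate) elliptical disk and $F(A) = \conv(F(A_1)\cup\{\lambda\})$.

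By Theorem~\ref{th:cont}, strong continuity of $f_A^{-1}$ can fail only at round points of $\partial F(A)$, and these all lie on the elliptic arc $\partial F(A_1)\cap\partial F(A)$ (corners of $\partial F(A)$ and relative interiors of its flat segments, when $\lambda\notin F(A_1)$, are non-round). Writing a general representative $x=(y,c)^T\in\C^2\times\C$ with $y=s\tilde{y}$, $\tilde{y}\in\C S^2$, $s=\norm{y}\in[0,1]$, one has $f_A(x)=s^2 f_{A_1}(\tilde{y})+(1-s^2)\lambda$. For a round point $z\in\partial F(A_1)$ with $z\neq\lambda$, the equation $f_A(x)=z$ forces $f_{A_1}(\tilde{y})=\lambda+(z-\lambda)/s^2$, which is a point lying on the line through $\lambda$ and $z$ at or beyond $z$; since $z$ is already on the ellipse $\partial F(A_1)$, any such point can belong to $F(A_1)$ only for $s=1$. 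Hence $f_A^{-1}(z)$ reduces to the single class $[(f_{A_1}^{-1}(z),0)]$, and Proposition~\ref{comp} yields strong continuity at $z$.

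This leaves $z=\lambda$ as the only candidate for failure, and only when $\lambda\in\partial F(A_1)$ (otherwise $\lambda$ is either a corner of $\partial F(A)$ or belongs to $\inter F(A)$, and strong continuity holds automatically). At such a $z$, Example~\ref{ex:3x3} shows that strong continuity can indeed fail, but weak continuity is inherited from the 2-by-2 block via the preimage $x_0=(f_{A_1}^{-1}(\lambda),0)$: any neighborhood $U\subset\C S^3$ of $x_0$ contains a set $\{(\tilde{y},0):\tilde{y}\in V\}$ for some neighborhood $V$ of $f_{A_1}^{-1}(\lambda)$ in $\C S^2$, and by the strong continuity of $f_{A_1}^{-1}$ at $\lambda$ guaranteed by Corollary~\ref{co:n2}, the image $f_{A_1}(V)$ covers a relative neighborhood of $\lambda$ in $F(A_1)=F(A)$, which verifies \eqref{cont} at $x_0$. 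The main subtlety is the geometric step forcing $s=1$ in the preimage analysis; once that is in place, the rest is bookkeeping over the possible positions of $\lambda$ relative to $F(A_1)$.
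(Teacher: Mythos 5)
Your proof is correct and follows essentially the same route as the paper: reduce via Theorem~\ref{th:n3} and Corollary~\ref{co:norm} to $A=A_1\oplus[\lambda]$ with $A_1$ non-normal, observe that $f_A^{-1}(\zeta)$ consists of a single projective class for every boundary point $\zeta\neq\lambda$ (so Proposition~\ref{comp} applies), and treat $\zeta=\lambda$ exactly as in Example~\ref{ex:3x3}. Your convexity argument forcing $s=1$ is a welcome elaboration of the linear-dependence claim that the paper merely asserts.
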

\begin{proof} Due to Theorem~\ref{th:n3}, we need to consider only unitarily reducible matrices. By Corollary~\ref{co:norm}, we have the normal case covered. Thus, without loss of generality $A=A_1\oplus [z]$ for some non-normal 2-by-2 matrix $A_1$ and a number $z$. Since  $f_A^{-1}(\zeta)$ contains only linearly dependent vectors for all $\zeta\in \partial F(A)\setminus\{z\}$, the strong continuity of $f_A^{-1}$ is guaranteed  on $\inter F(A)\cup (\partial F(A)\setminus\{z\})$.  Finally, if $z\in\partial F(A)$, then strong continuity of $f_A^{-1}$ fails at $z$ while its weak continuity persists, as was shown in Example~\ref{ex:3x3}.  
\end{proof}

The proof of Theorem~\ref{th:n3} is based on the observation that for a
unitarily irreducible 3-by-3 matrix $A$ the preimages $f_A^{-1}(z)$ are
one dimensional for all round points $z\in \partial F(A)$. In higher dimensions,
however, there even exist unitarily irreducible matrices
$A$ for which all points of $\partial F(A)$ are generated by several
linearly independent vectors; see \cite{LSS} for respective examples.

Starting with $n=4$ it is possible for the strong continuity of
$f_A^{-1}$ to fail at a point of $\partial F(A)$ while
$A$ is unitarily irreducible.

\begin{example} \label{ex:UI4}
For $k_1, k_2, r >0$, { $k_1>k_2$,} let \[ A=\begin{bmatrix} 0 & 0 & ik_1 & 0 \\ 0 & 0 & 0 & ik_2 \\ ik_1 & 0 & 1 & ir  \\
0 & ik_2 & ir & 1 \end{bmatrix}= H+iK, \] where \[
H=\diag[0,0,1,1] \text{ and } K=\left[\begin{matrix}0 & 0 & k_1 & 0 \\  0 & 0
& 0 & k_2\\ k_1 & 0 & 0 & r\\ 0 & k_2 & r & 0\end{matrix}\right].\]  The matrix
$A$ is unitarily irreducible. Indeed, the eigenvectors of
$H$ are of the form \eq{ev} v_1=[x_1,x_2,0,0]^T \text{ and }
v_2=[0,0,x_3,x_4]^T \en  and thus are different from the eigenvectors of
$K$. Consequently, $H$ and $K$ do not have common
one-dimensional (thus, also three-dimensional) subspaces. A
two-dimensional common invariant subspace $\mathcal L$, if it
existed, would have to be spanned by non-zero $v_1,v_2$ of the form
\eqref{ev}. But then  $HKHv_2$ and  $HK^2Hv_2$, lying in $\mathcal
L$ and having the first two coordinates equal zero,  must be scalar
multiples of $v_2$. A direct computation shows that this is possible
only if $v_2=0$ which is a contradiction.

Observe further that  for any vector $x = [x_1,x_2,x_3,x_4]^T \in
\CS^4$,
$$f_A(x) = x^*Ax = 2i k_1 \Re(x_1 \bar{x}_3) + 2i k_2 \Re(x_2 \bar{x}_4) +2i
r\Re(x_3 \bar{x}_4) + |x_3|^2 + |x_4|^2.$$ Since $F(A)$ is the image of
$\CS^4$ under $f_A$, it follows that for any $z \in F(A)$, $0 \leq \Re z
\leq 1$. Note that $0 \in \partial F(A)$ { and $f_A^{-1}(0)$ is the unit sphere in the span of $e_1,e_2$, the first two vectors from the standard basis of $\C^4$}.

The boundary of $F(A)$ consists of the points $z=\alpha+i\beta$, where $\beta$ is one of the extremal values of $2k_1 \Re(x_1 \bar{x}_3) + 2k_2 \Re(x_2 \bar{x}_4) +2r\Re(x_3 \bar{x}_4)$ under the constraints $\abs{x_3}^2+\abs{x_4}^2=\alpha$, $\abs{x_1}^2+\abs{x_2}^2=1-\alpha$. For $\alpha<1$, the vector $x\in f_A^{-1}(z)$ must have a non-zero coordinate $x_1$, because otherwise the flip $x_1 \leftrightarrow x_2$, $x_3\leftrightarrow x_4$ would yield a more extreme value of $\beta$ while $\alpha$ would not change. Without loss of generality, $x_1>0$. Then $x_2,x_3,x_4$ are all non-negative for the portion of $\partial F(A)$ in the upper half plane, and $x_2,x_3<0, x_4>0$ for its portion in the lower half plane. Consequently, the image of a small neighborhood of $x\in f_A^{-1}(0)$ with non-zero $x_2$ will have to miss either the upper or the lower portion of $\partial F(A)$. This proves that $f_A^{-1}$ is not strongly continuous at the origin.

\end{example}

Although the strong continuity of $f_A^{-1}$ fails at $z =0$ in Example \ref{ex:UI4}, it can be shown that weak continuity still holds. The following 6-by-6 example demonstrates that weak continuity of $f_A^{-1}$ can fail in general for unitarily irreducible matrices.

\begin{example} \label{ex:size6}
Let $A = H + iK$ where $H = \text{diag}([0,0,0,1,1,1])$ and $K = \ds \left[ \begin{array}{c|c}  0 & K_1 \\ \hline K_1 & R \end{array} \right]$ where $\ds K_1 = \left[ \begin{array}{rrr} 2 & 0 & 0 \\ 0 & 2 & 0 \\ 0 & 0 & 1 \end{array} \right]$ and $\ds R = \left[ \begin{array}{rrr} 1 & 0 & 1 \\ 0 & 0 & 1 \\ 1 & 1 & 0 \end{array} \right]$. To see that $A$ is unitarily irreducible, it suffices to show that $H$ and $K$ have no common invariant subspaces. Since the eigenvalues of $K$ are distinct, we can simply pick an eigenvector $x$ of $K$ and verify that $\{x,Hx,HKHx,HK^2Hx,K^2Hx,K^3Hx\}$ span $\C^6$. This can be easily done numerically. However we prefer to provide a theoretical justification. If $V$ is an invariant subspace of both $H$ and $K$, then so is $V^\perp$, so if a nontrivial invariant subspace $V$ exists, we may assume that $\dim V \le 3$.  The image $H(V)$ must be a subspace of $V$.  If $\dim H(V) = 3$, then $H(V) = V$ and $V = \text{span} \{e_4,e_5,e_6 \}$.  Clearly, $V$ is not an invariant subspace of $K$, however, so we can rule out this possibility.  If $\dim H(V) = 2$, then since $K_1$ is a nonsingular 3-by-3 block, $(I-H)KH(V)$ must also be a 2 dimensional subspace of $V$ orthogonal to $H(V)$, which contradicts our assumption that $V$ has dimension no more than 3.  If $\dim H(V) = 1$, then both $HKH$ and $HK^2H$ must have $H(V)$ as an eigenspace.  Thus one of the eigenvectors of $R$ corresponds to $H(V)$ as does one of the eigenvectors of $K_1^2 + R^2$.  This cannot be the case since the $K_1$ and $R$ have no common eigenvectors.

Let $x = [x_1,x_2,x_3]^T$, $y = [y_1,y_2,y_3]^T \in \C^3$ with $x^*x = y^*y = 1$, and let $v = [\sqrt{1-\alpha} x,  \sqrt{\alpha} y]^T$ where $\alpha \in [0,1]$.  Then
$$f_A(v) = \alpha + 2 i \sqrt{\alpha - \alpha^2} \real(x^*K_1y) + i\alpha y^*Ry.$$
If $z = \alpha + i \beta \in \partial F(A)$, then $\beta$ corresponds to the maximum or minimum possible values of
\begin{equation} \label{eq:beta}
\imag(f_A(v)) = 2 \sqrt{\alpha - \alpha^2} \real(x^*K_1y) + \alpha y^*Ry
\end{equation}
subject to the constraints $y^*y = x^*x = 1$.  For any given $y$, \eqref{eq:beta} is maximized by choosing $x$ to be a normalized multiple of $K_1 y$, that is $x = \frac{K_1y}{||K_1y||}$.  Then \eqref{eq:beta} becomes
$$
\imag(f_A(v)) = 2 \tfrac{\sqrt{\alpha-\alpha^2}}{||K_1y||} y^*K_1^2y  + \alpha y^*Ry = 2 \sqrt{\alpha-\alpha^2}||K_1 y||  + \alpha y^*Ry.
$$
To minimize \eqref{eq:beta} for a fixed $y$, $x$ must be a negative scalar multiple of $K_1y$, in which case \eqref{eq:beta} becomes
$$
\imag(f_A(v)) = -2 \tfrac{\sqrt{\alpha-\alpha^2}}{||K_1y||} y^*K_1^2y  + \alpha y^*Ry  = -2 \sqrt{\alpha-\alpha^2}||K_1 y||  + \alpha y^*Ry.
$$
For both the maximum and minimum, we may assume that $y \in \R^3$.  Suppose we choose a sequence $\alpha_k \in [0,1]$ such that $\alpha_k \rightarrow 0$.  For each $\alpha_k$, choose $\yp$ which maximizes \eqref{eq:beta} when $\alpha = \alpha_k$ and $\ym \in \R^3$ which minimizes \eqref{eq:beta}.  By passing to a subsequence we can assume that $\yp \rightarrow y^+(0) \in \R^3$ and $\ym \rightarrow y^-(0) \in \R^3$, respectively.  To each $\yp$ we associate $v^+(\alpha_k) = [\sqrt{1-\alpha_k} \yp, \sqrt{\alpha_k} \yp]^T$ and to each $\ym$ we associate $v^-(\alpha_k) = [-\sqrt{1-\alpha_k} \ym, \sqrt{\alpha_k} \ym]^T$.  Then $f_A(v^+(\alpha_k))$ and $f_A(v^-(\alpha_k))$ are the two points on the boundary of $F(A)$ with real part equal to $\alpha_k$.  Note that $||K_1 y||^2 = 2-y_3^2$.  Therefore as $\alpha_k \rightarrow 0$ the ratio $\alpha_k/\sqrt{\alpha_k - \alpha_k^2}$ approaches zero and it follows that $\yp_3 \rightarrow 0$ and $\ym_3 \rightarrow 0$.

Suppose that $y_3 = c$ is fixed.  Then the value of $||K_1y||$ is constant, and to optimize \eqref{eq:beta} we need only find the extreme points of $y^*Ry = y_1^2 + 2c(y_1+y_2)$ subject to the constraint $y_1^2 + y_2^2 = 1-c^2$. It is a standard exercise to show as $c \rightarrow 0$, the maximum converges to $y_1 = 1$, and the minimum always occurs when $y_2 = 1-c^2$.  Therefore $v^+(\alpha_k) \rightarrow e_1$ while $v^-(\alpha_k) \rightarrow e_2$ proving that weak continuity does not hold for $f^{-1}_A$ at $z = 0$.
\begin{figure}[ht]
\includegraphics[scale=1]{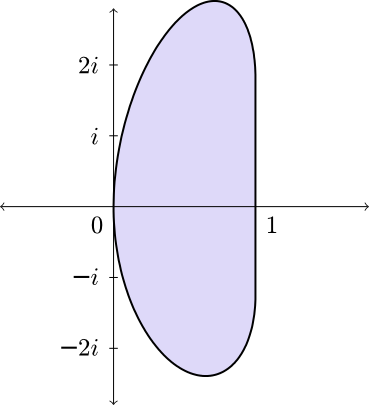}
 
\caption{$F(A)$ from Example \ref{ex:size6}.  Weak continuity fails at $z=0$. }
\end{figure}

\end{example}

\bibliography{master}
\bibliographystyle{plain}

\end{document}